\newcommand{\thtw}{\frac{\theta}{2}}
\newcommand{\delete}[1]{}
\theoremstyle{plain}
\newtheorem{theo}{Theorem}[section]
\theoremstyle{remark}
\newtheorem{rem}[theo]{Remark}
\begin{document}

\title[Observable clade sizes]{The minimal observable clade size of exchangeable coalescents}

\date{\today}
 
\author{Fabian Freund}
  \address{Crop Plant Biodiversity and Breeding Informatics Group (350b), Institute of Plant Breeding, Seed Science and Population Genetics,   University of Hohenheim, Fruwirthstrasse 21, 70599 Stuttgart, Germany}
 \email{fabian.freund@uni-hohenheim.de}
 
  \author{Arno Siri-J\'egousse}
  \address{Departamento de Probabilidad y Estad\'istica,
IIMAS, Universidad Nacional Aut\'onoma de M\'exico, Mexico City, Mexico. }
 \email{arno@sigma.iimas.unam.mx}

\begin{abstract}
For $\Lambda$-$n$-coalescents with mutation, we analyse the size $O_n$ of the partition block of $i\in\{1,\ldots,n\}$ at the time where the first mutation appears on the tree that affects $i$ and is shared with any other $j\in\{1,\ldots,n\}$. We provide asymptotics of $O_n$ for $n\to\infty$ and a recursion for all moments of $O_n$ for finite $n$.
This variable gives an upper bound for the minimal clade size \cite{Blum2005}, which is not observable in real data.
In applications to genetics, it has been shown to be useful to lower classification errors in genealogical model selection \cite{FSStat}. 
\end{abstract}

\keywords{clade size, $\Lambda$-$n$-coalescent, recursion}
\subjclass[2010]{Primary 60C05; Secondary 92D20, 60F15, 60G09}
\maketitle

\section{Introduction}
The potential for adaption of organisms to diverse environments is based on their genetic diversity. Moreover, the specific historic pattern of adaptation and demography leaves distinct marks in the genetic diversity of a sample taken from a population of said organisms. When observing the genetic diversity of a single non-recombining part of the genome, the diversity can be described by the inheritance pattern of the mutations on the genealogical tree of the sample, usually given by a Poisson point process on the genealogy. Modelling the genealogy is thus an important aspect of modelling genetic diversity. Usually, the exact genealogy is not known and cannot be reconstructed perfectly from observed genetic data (an example is provided later). Thus, genealogy models are usually defined as random variables on the set of possible genealogical trees.\\  
Here, we are concerned with the genealogical tree of $n$ alleles, i.e. the genetic information of a sample of size $n$ from a genomic region.  Coalescent theory provides a rich class of genealogical tree models for a sample of $n$ alleles as much as elegant tools and a convenient setting up for statistical inference. In particular, Kingman's coalescent \cite{Kingman1982b} and the larger family of coalescents with multiple collisions \cite{Pitman1999, Sag99} were widely studied in the past decades.
This class of Markov processes on the set of partitions of $[n]:=\{1,\ldots,n\}$ is characterized by a finite measure $\Lambda$ on $[0,1]$, justifying their name of $\Lambda$-$n$-coalescents.
If a $\Lambda$-$n$-coalescent has $b$ blocks, any given $k$-tuple of them will merge at rate
$$\lambda_{b,k}=\int_0^1x^{k-2}(1-x)^{b-k}\Lambda(dx)$$
and the rate for the next coalescence event is
$$\lambda_b=\sum_{k=2}^b\binom{b}{k}\lambda_{b,k}.$$
The starting partition is $\{1\},\ldots,\{n\}$. The genealogical tree is recovered from the partition-valued process by first starting $n$ branches from the leaves $1,\ldots,n$. Then, any merger of partition blocks corresponds to a joining of branches in a node (ancestor), where a single new branch starts. Partition blocks correspond to branches in the genealogical tree, the time a partition block is not merged  gives the length of this branch.  We refer to \cite{GIM14} for a survey.\\
Genealogical trees of the alleles in a genetic region come with an interpretation of relatedness and genetic similarity: An allele $i$ is more closely related to allele $j$ than to allele $l$ if the common ancestor of $(i,j)$ appears more recently than the common ancestor of $(i,l)$, while the path lengths between leaves (alleles) measure the time available to accumulate mutations that decrease genetic similarity. Several statistics aim to capture these aspects and their biological meaning. For instance, the {\it minimal clade size} of an allele $i$ gives the number of closest relatives of $i$, see \cite{Blum2005}. Another example is the {\it length of the external branch} of $i$, i.e. the waiting time for the first merger of $i$, which gives a measure of the genetic uniqueness of an individual \cite{RB04}. 
The mathematical properties of the minimal clade size \cite{Blum2005, Freund2014, SY16, Freund2017size}, the length of an external branch \cite{Blum2005, CNR07, DFSY}, as much as the family (partition block) sizes at this time \cite{Blum2005, SY18}, have been analysed recently.
In these works, asymptotic and exact behaviours are obtained for various examples of exchangeable coalescents.\\
However, these statistics cannot be observed directly from the genetic data. We will illustrate this for the minimal clade. By a clade we denote the set of all alleles that share a specific ancestor, and the minimal clade of $i$ is the smallest clade including $i$. Assume the infinite-sites model of mutation, each mutation causes a change at a different position in the genomic region. Further assume that we know the ancestral state at the genomic region, i.e. we can identify mutations as changes compared to the ancestral state. Any clade can only be observed if there is at least one mutation that all its members share. This mutation is inherited from the common ancestor, thus has to be placed on the branch that connects this ancestor further towards the root of the genealogy (the most recent common ancestor of the whole sample). Thus, we can only observe a clade if there is a mutation on the branch directly above of the ancestor defining this clade.\\
Instead of looking at the minimal clade of an allele $i$, one could consider the smallest clade which includes $i$ that can be observed from the data. We considered the sizes of these clades for all alleles sampled, the {\it minimal observable clade sizes}, in \cite{FSStat}. There we could show that they provide an additional set of statistics that faciliates the inference of a well-fitting genealogy model when coupled with standard statistics of genetic diversity as the site frequency spectrum.\\
In this article, we study some mathematical properties of the minimal observable clade size for an individual $i$. Its asymptotic behaviour for any $\Lambda$-$n$-coalescent for $n\to\infty$ as well as a recursion for all moments for finite $n$ are established. For the Bolthausen-Sznitman coalescent, which provides a somewhat universal genealogical model for populations under strong selection, see e.g. \cite{Neher2013a}, \cite{Desai2013}, \cite{Schweinsberg2017}, we can show that the minimal observable clade size is asymptotically Beta-distributed.

\section{A formal definition of the minimal observable clade size}
Let $[n]:=\{1,\ldots,n\}$ for $n\in\mathbb{N}$, $[n]_0=[n]\cup\{0\}$. For any $\Lambda$-$n$-coalescent and a sampled allele $i\in[n]$, define 
\begin{itemize}
\item $\mathcal{C}_{n,i}(t)$ as the partition block $i$ is in at time $t$ (a size-biased pick of a block of the $n$-coalescent at time $t$)
\item $\kappa_n$ as the total number of jumps of the $\Lambda$-$n$-coalescent, $\kappa_{n,i}$ as the total number of jumps (the block of) $i$ participates in
\item $K_{n,i}(0)(=0),K_{n,i}(1),\ldots,K_{n,i}(\kappa_{n,i})(=\kappa_n)\in[\kappa_{n}]_0$ as the successive indices of jumps in the $\Lambda$-$n$-coalescent in which the block of $i$ is involved   
\item $\mathcal{C}_{n,i}[k]$ as the partition block $i$ is in at the time of its $k$th jump $K_{n,i}(k)$, $k\in[\kappa_{n,i}]_0$. $\mathcal{C}_{n,i}[0]=\{i\}$, $\mathcal{C}_{n,i}[1]$ is the minimal clade of $i$, $\mathcal{C}_{n,i}[\kappa_{n,i}]=[n]$. 
\end{itemize}
Given the $\Lambda$-$n$-coalescent tree, we set mutations on its branches via a homogeneous Poisson point process with rate $\thtw$. 
Mutations are interpreted under the infinite sites model, each mutation hits a site not hit by any other mutation, producing a new type. The new type is called derived type in contrast to the ancestral type of the most recent ancestor of the sample. Mutations on external branches are affecting only one individual, we will call these private mutations; they can also be referred to as singleton mutations. All other mutations are called non-private mutations. Since we are interested in the mutations carried by individual $i$, we have to record the mutations from $t=0$ to the time back to the most recent common ancestor of the sample (the root of the genealogy) on the path of $i$.  Let $T^{(i)}_n$ be the waiting time until the first (youngest) mutation on the path of $i$ that is non-private, i.e. does not fall on the external branch which ends in $i$ (which has length $E^{(i)}_n$). If we continue the path of $i$ after reaching the most recent common ancestor as a single ancestral line indefinitely (which we will do from now on), we have 
\begin{equation}\label{eq:Tnp}
T^{(i)}_n\stackrel{d}{=}E^{(i)}_n+M^{(i)}
\end{equation}
for an independent exponential random variable $M^{(i)}$ with rate $\thtw$. Let 
\begin{equation}\label{def:Ln}
L^{(i)}_n:=\max\{k,\mbox{ jump $K_{n,i}(k)$ happens earlier than $T^{(i)}_n$}\}\in[\kappa_{n,i}],
\end{equation}
i.e. the $L^{(i)}_n$th jump that $i$ participates in is the last jump of it before $T^{(i)}_n$. The \textit{minimal observable clade of $i$} is then given by 
$$
\mathcal{C}_{n,i}[L^{(i)}_n] =\{j\in[n]:\mbox{$j$ shares all non-private mutations of $i$}\}
$$
The definitions are equivalent since all non-private mutations of $i$ are inherited from the youngest ancestor of $i$ that bears at least one mutation on the branch connecting it to the next older ancestor. If $i$ has no non-private mutations, $\mathcal{C}_{n,i}[L^{(i)}_n]=[n]$ almost surely, since in this case $T^{(i)}_n$ is larger than the time back to the most recent common ancestor.\\
The statistic we are interested in is the size of the minimal observable clade of an allele $i$ 
$$
O_n(i):=|\mathcal{C}_{n,i}[L^{(i)}_n]|.
$$
See Figure \ref{fig:onmn} for an example. Due to exchangeability, the distribution of $O_n(i)$ does not depend on $i$, we can even choose $i$ randomly without changing the distribution. For ease of notation, we fix the allele we are interested in to allele 1 and abbreviate $O_n:=O_n(1)$.
\begin{rem}
Since the partition block including $i$ can only increase in size over time, the minimal clade $\mathcal{C}_{n,i}[1]$ is a subset of the minimal observable clade $\mathcal{C}_{n,i}[L^{(i)}_n]$ for $i\in[n]$. Thus, the size $M_n(i)$ of the minimal clade of $i\in[n]$ satisfies $M_n(i)\leq O_n(i)$. See Figure \ref{fig:onmn} for an example.
\end{rem}
\begin{figure}[ht]
\includegraphics[scale=1]{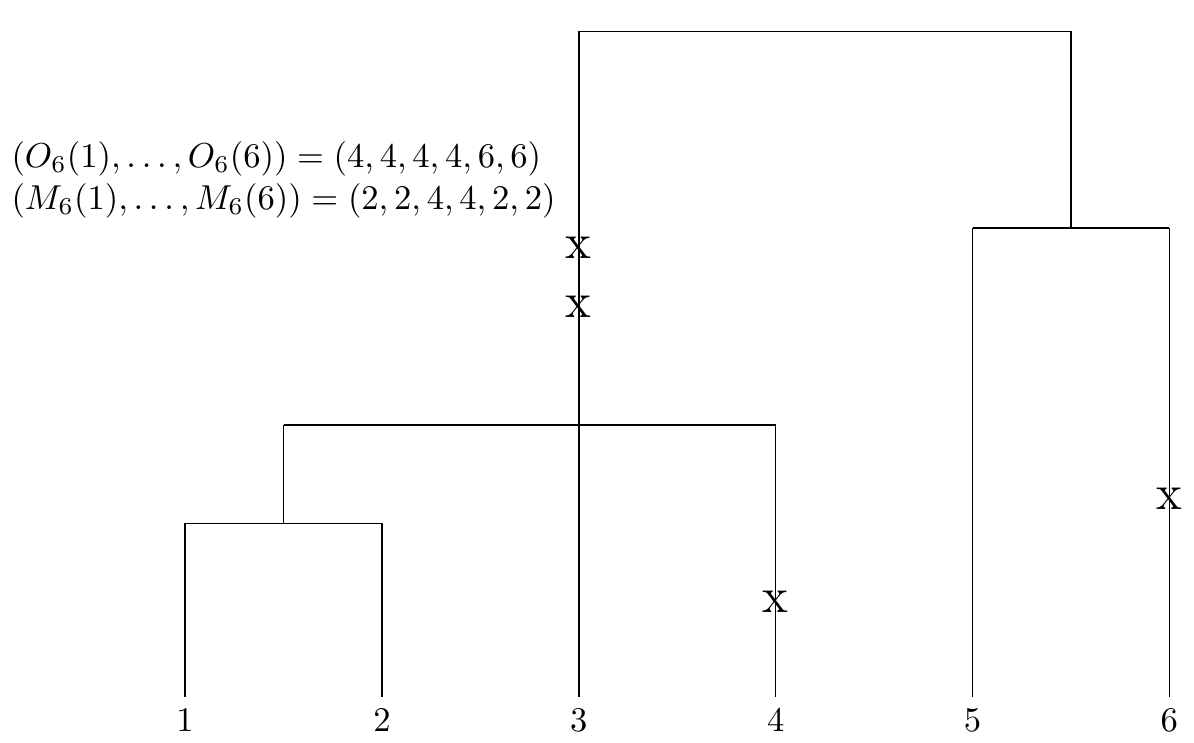}
\caption{Genealogical tree and its minimal observable clade sizes $O_6(i)$ and minimal clade sizes $M_6(i)$ for $i\in[6]$. x denotes a mutation.}
\label{fig:onmn}
\end{figure}

\section{Asymptotics of the observable clade size}
Asymptotically for sample size $n\to\infty$, the probabilistic structure of $O_n$ simplifies considerably. 
First, we focus on coalescents without dust, a class which includes Beta($2-\alpha$,$\alpha$)-$n$-coalescents for $\alpha\in(1,2)$ \cite{Sch03}, Kingman's $n$-coalescent ($\Lambda=\delta_0$) and the Bolthausen-Sznitman $n$-coalescent ($\Lambda$ uniform on $[0,1]$). A $\Lambda$-$n$-coalescent has dust if and only if $\mu_{-1}:=\int_{[0,1]} x^{-1} \Lambda(dx)<\infty$, see \cite{Pitman1999}.\\ 

\begin{theo}\label{prop:convtoS}
Let $O_n$ be defined for $\Lambda$-$n$-coalescents such that $\mu_{-1}=\infty$ (without dust) and with mutation rate $\thtw$. We have \begin{equation}\label{limOn}\frac{O_n}{n}\stackrel{a.s.}{\to} S\end{equation} for $n\to\infty$ with $S>0$ a.s.. $S$ is distributed as the size of the block of individual 1 (alternatively a size-biased pick of a block size) at a random time $M\stackrel{d}{=}Exp\left(\thtw\right)$. The distribution of $S$ is uniquely determined by its moments
\begin{equation}\label{eq:Smoments_nodust}
E(S^k)=1-\sum_{r=2}^{k+1} a_{k+1,r} \frac{\thtw}{\lambda_r+\thtw}, 
\end{equation}  
where $\lambda_r$ is the total rate of the $\Lambda$-coalescent in a state with $r$ blocks and $a_{k+1,r}$ is a rational function of $\lambda_2,\ldots,\lambda_{k+1}$, defined as in \cite[Prop. 29]{Pitman1999}. In particular,
$$E(S)=\frac{\Lambda([0,1])}{\Lambda([0,1])+\thtw},\ E(S^2)=1-\frac{3}{2}\frac{\thtw}{\Lambda([0,1])+\thtw}+\frac{1}{2}\frac{\thtw}{\lambda_3+\thtw}.$$ 
\end{theo}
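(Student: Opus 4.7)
The plan is to couple every $\Lambda$-$n$-coalescent with a single $\Lambda$-coalescent $(\Pi(t))_{t\geq 0}$ on $\mathbb{N}$ and to combine two standard no-dust facts. For each fixed $t>0$, the strong law of large numbers applied to the block of $1$ in the exchangeable partition $\Pi(t)$ (via Kingman's paintbox representation) produces an almost-sure asymptotic frequency
\begin{equation*}
F(t):=\lim_{n\to\infty}\frac{|\mathcal{C}_{n,1}(t)|}{n},
\end{equation*}
which is the size-biased pick among the asymptotic frequencies of $\Pi(t)$. Because $\mu_{-1}=\infty$, the proper frequencies of $\Pi(t)$ sum to $1$ a.s., so $F(t)>0$ a.s.\ for every $t>0$. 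Second, the external branch length $E^{(1)}_n$ is dominated by an exponential of rate
\begin{equation*}
\int_0^1\bigl(1-(1-x)^{n-1}\bigr)\,x^{-1}\,\Lambda(dx)\xrightarrow[n\to\infty]{}\mu_{-1}=\infty
\end{equation*}
by monotone convergence, so $E^{(1)}_n\to 0$ a.s.; coupling the non-private Poisson mutations on the path of $1$ further gives $T^{(1)}_n=E^{(1)}_n+M^{(1)}\to M^{(1)}$ a.s.\ with $M^{(1)}\sim\mathrm{Exp}(\theta/2)$ independent of $\Pi$.

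Next I would transfer the pointwise paintbox convergence to the $n$-dependent random time $T^{(1)}_n$. Since both $F_n(t):=|\mathcal{C}_{n,1}(t)|/n$ and $F$ are non-decreasing in $t$, for any $\epsilon>0$ and all large $n$ one has $F_n(M^{(1)}-\epsilon)\leq F_n(T^{(1)}_n)\leq F_n(M^{(1)}+\epsilon)$, while $M^{(1)}$ is almost surely a continuity point of $F$ (it has a continuous distribution and is independent of the countable jump set of $F$). Pointwise convergence of $F_n$ at $M^{(1)}\pm\epsilon$ and letting $\epsilon\downarrow 0$ sandwich $F_n(T^{(1)}_n)$ to $F(M^{(1)})$. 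Because $\mathcal{C}_{n,1}[L^{(1)}_n]=\mathcal{C}_{n,1}(T^{(1)}_n-)$ by the definition of $L^{(1)}_n$, this yields
\begin{equation*}
\frac{O_n}{n}\stackrel{a.s.}{\longrightarrow}F(M^{(1)})=:S,
\end{equation*}
which identifies $S$ as stated and gives $S>0$ a.s.

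For the moments I would condition on $M^{(1)}$, producing
\begin{equation*}
E(S^k)=\int_0^\infty E[F(t)^k]\,\tfrac{\theta}{2}e^{-\theta t/2}\,dt.
\end{equation*}
Since $F(t)$ is a size-biased pick, exchangeability of $\Pi(t)$ gives the identity
\begin{equation*}
E[F(t)^k]=E\Bigl[\sum_i F_i(t)^{k+1}\Bigr]=P(1,2,\ldots,k+1\text{ lie in the same block of }\Pi(t))=P(N_{k+1}(t)=1),
\end{equation*}
where $N_{k+1}$ is the block-count process of the $\Lambda$-coalescent restricted to $[k+1]$, a pure-death chain with exit rate $\lambda_r$ from state $r$. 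Pitman's Proposition~29 supplies the spectral expansion
\begin{equation*}
P(N_{k+1}(t)=1)=1-\sum_{r=2}^{k+1}a_{k+1,r}\,e^{-\lambda_r t},
\end{equation*}
and integrating term by term against the density of $M^{(1)}$ replaces each $e^{-\lambda_r t}$ by $(\theta/2)/(\lambda_r+\theta/2)$, giving \eqref{eq:Smoments_nodust}. Since $S\in[0,1]$ the moment problem is determinate. The explicit $E(S)$ and $E(S^2)$ follow by reading off $a_{2,2}=1$, $a_{3,2}=3/2$, $a_{3,3}=-1/2$ from Pitman's recursion together with $\lambda_2=\Lambda([0,1])$.

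The principal technical difficulty is precisely this evaluation at the $n$-dependent random time $T^{(1)}_n$: one must combine a.s.\ paintbox convergence at fixed times with the a.s.\ convergence $T^{(1)}_n\to M^{(1)}$, which is why I exploit the monotonicity of $|\mathcal{C}_{n,1}(t)|$ in $t$ and the a.s.\ continuity of $F$ at the independent random time $M^{(1)}$. The no-dust hypothesis $\mu_{-1}=\infty$ enters exactly twice: it forces $E^{(1)}_n\to 0$ and it guarantees $F(t)>0$ a.s., so that the limit $S$ is a genuine $(0,1]$-valued random variable rather than degenerate at $0$.
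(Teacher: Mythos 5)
Your overall route is the paper's own: couple the $n$-coalescents by consistency, show the external branch of $1$ vanishes, identify the limit as the size-biased block frequency at an independent $Exp(\thtw)$ time, and compute the moments from Pitman's Proposition 29 (your sandwich argument at the random time is a slightly heavier substitute for the paper's observation that $T^{(1)}_n$ is eventually \emph{equal} to the first mutation point $M$ on the path of $1$, once $E^{(1)}_n<M$). However, there is a genuine gap in your justification that $E^{(1)}_n\to 0$: you claim $E^{(1)}_n$ is dominated by an exponential of rate $r_n=\int_0^1(1-(1-x)^{n-1})x^{-1}\Lambda(dx)$. That rate is the rate at which the singleton $\{1\}$ participates in a merger \emph{while there are still $n$ blocks}; as soon as mergers among the other blocks occur, the block count $b$ drops and the participation rate $\int_0^1(1-(1-x)^{b-1})x^{-1}\Lambda(dx)$ decreases (it is increasing in $b$). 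Hence the hazard rate of $E^{(1)}_n$ is bounded \emph{above} by $r_n$, so $Exp(r_n)$ is a stochastic \emph{lower} bound for $E^{(1)}_n$, not an upper bound, and $r_n\to\infty$ gives no control on $E^{(1)}_n$. The failure is not cosmetic: for coalescents that come down from infinity (e.g.\ Kingman or Beta$(2-\alpha,\alpha)$, $\alpha\in(1,2)$) the block count at any fixed $t>0$ stays bounded as $n\to\infty$, so no exponential bound with diverging rate can hold uniformly in time.

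Since in your write-up the no-dust hypothesis enters the convergence part only through this invalid bound, that step needs a correct argument; a standard repair is essentially the paper's appeal to Pitman's Proposition 26. By consistency $E^{(1)}_n$ decreases to $E^{(1)}_\infty$, the time at which $1$ stops being a singleton in the coalescent on $\mathbb{N}$; for each fixed $t>0$, exchangeability and Kingman's correspondence give $P(\{1\}\mbox{ is a singleton at time }t)=E\bigl[1-\sum_i F_i(t)\bigr]=0$ because $\mu_{-1}=\infty$ means no dust, hence $E^{(1)}_\infty\le t$ a.s.\ for every $t>0$, i.e.\ $E^{(1)}_n\to 0$ a.s. With that repair, the remainder of your argument --- positivity of $F(t)$, the monotonicity/continuity sandwich at $M^{(1)}$, the size-biased identity $E[F(t)^k]=P(N_{k+1}(t)=1)$ and the term-by-term integration against the $Exp(\thtw)$ density, plus moment determinacy on $[0,1]$ --- is correct and coincides with the paper's proof.
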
  
 
\begin{proof}
Let $E^{(1)}_n$ be the waiting time for the first collision of individual 1. By the consistency of the $n$-coalescents, we have
 $E^{(1)}_2\geq E^{(1)}_3\geq \ldots$. By a slight adaptation of \cite[Prop. 26]{Pitman1999}, we see that $E^{(1)}_n\stackrel{d}{\to}0$ for $n\to\infty$. Since $(E^{(1)}_n)_{n\in\mathbb{N}}$ is monotonically decreasing, this convergence also holds almost surely.\\

All mutations of individual 1 on any $n$-coalescent lie on the path of leaf 1 to the root of the coalescent tree and are consistent for different values of $n$ (any mutation on the path to the root in the $m$-coalescent is also a mutation on this path in every $n$-coalescent with $n>m$), since for $m<n$, the $m$-coalescent (seen as a tree) is the subtree of the $n$-coalescent which is spanned by the leaves $[m]$, including mutations. Thus, we can represent the mutations of individual 1 on the $n$-coalescent by one common homogeneous Poisson process for all $n$, independent of the $n-$coalescents, on $[0,\infty)$ with rate $\thtw$ 
This gives a new representation for $T_n^{(1)}$, it is the smallest Poisson point $T$ with $T\geq E^{(1)}_n$. Let $M$ be the smallest Poisson point overall. Since $E^{(1)}_n\to 0$ a.s. for $n\to\infty$, for any realisation of the coalescent there exists a $n_0\in\mathbb{N}$ s.t. $E^{(1)}_m<M$ a.s. for all $n\geq n_0$. This shows that $T_n^{(1)}=M$ a.s. for $m\geq n_0$, which implies 
\begin{equation}\label{eq:Snodust}
\lim_{n\to\infty}\frac{O_n}{n}=\lim_{n\to\infty}\frac{\mathcal{C}_{n,1}(M)}{n}=f_1(M)(=f_1(M^{(1)})) \mbox{ a.s.},
\end{equation}
where $f_1(t)$ is the (asymptotic) frequency of the block individual 1 is in at time $t\geq 0$ in the $\Lambda$-coalescent (with values in $\mathbb{N}$, see \cite{Pitman1999}). The existence of the limit follows from Kingman's correspondence, since the coalescent stopped at the random time $M$ (independent of the coalescent) gives an exchangeable partition of $\mathbb{N}$. Since we have a coalescent without dust, we have no singleton blocks a.s. at any time $t>0$ and a (potentially infinite) number of blocks with a.s. positive frequencies, again due to Kingman's correspondence. This shows $f_1(M)>0$ a.s..
 Due to exchangeability, the distribution of $f_1(M)$ is the same as if we would make a size-biased pick from all blocks present.\\
{Denoting $f_1(M)$ by $S$}, consider the moments 
$$
E(S^k)=\int_0^{\infty} E((f_1(t))^k) \thtw e^{-\thtw t} dt.
$$
From \cite[Eq. (50) and Prop. 29]{Pitman1999} we see that $E((f_1(t))^k)=1-\sum_{r=2}^{k+1} a_{k+1,r} e^{-\lambda_r t}$ due to a connection to the exchangeable partition function of the coalescent at time $t$. Thus, we have
\begin{equation}\label{eq:moments_nodust}
E(S^k)=1- \sum_{r=2}^{k+1} a_{k+1,r}\int_0^{\infty} e^{-\lambda_r t}\thtw e^{-\thtw t} dt=1-\sum_{r=2}^{k+1} a_{k+1,r}\frac{\thtw}{\lambda_r+\thtw},
\end{equation}
which is Eq. \eqref{eq:Smoments_nodust}. Using the explicit values $a_{2,2}=1,a_{3,2}=\frac{3}{2},a_{3,3}=-\frac{1}{2}$ (essentially from \cite[Eq. (39),(40)]{Pitman1999}) and $\lambda_2=\Lambda([0,1])$ yields the first two moments. Since $S$ takes values in $[0,1]$, its distribution is uniquely determined by its moments.    
\end{proof}
For the special case of the Bolthausen-Sznitman coalescent, the law of $S$ can be identified
\begin{theo}\label{SBSC}
For the Bolthausen-Sznitman $n$-coalescent, 
$$\frac{O_n}{n}\stackrel{a.s.}{\to} S,$$
for $n\to\infty$ where $S\stackrel{d}{=}Beta\left(\frac{1}{1+\thtw},\frac{\thtw}{1+\thtw}\right)$.
\end{theo}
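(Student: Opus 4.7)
The plan is to combine Theorem~\ref{prop:convtoS} with the explicit structure of the Bolthausen-Sznitman coalescent's asymptotic block frequencies. Theorem~\ref{prop:convtoS} already gives $O_n/n \to S$ almost surely, with $S$ distributed as the asymptotic frequency $f_1(M)$ of the block containing individual~$1$ in the infinite Bolthausen-Sznitman coalescent, evaluated at an independent mutation time $M \sim \mathrm{Exp}(\thtw)$. So the task reduces to identifying the law of $f_1(M)$ with the claimed Beta distribution.

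To obtain the conditional law of $f_1(t)$ for fixed $t > 0$, I would invoke the classical result (Pitman, with later developments by Bertoin-Le Gall and Basdevant-Goldschmidt) that the ranked asymptotic frequencies of the Bolthausen-Sznitman coalescent started from the partition of $\mathbb{N}$ into singletons form a Poisson-Dirichlet distribution $\mathrm{PD}(e^{-t}, 0)$. By exchangeability, the block containing the fixed index~$1$ is a size-biased pick from these ranked frequencies, and the Pitman-Yor stick-breaking representation identifies such a size-biased pick with a $\mathrm{Beta}(1-\alpha,\alpha)$ variable for $\alpha = e^{-t}$. Hence $f_1(t) \sim \mathrm{Beta}(1-e^{-t}, e^{-t})$ for every $t > 0$.

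The remaining, and main technical, step is to integrate this conditional law against $M \sim \mathrm{Exp}(\thtw)$ and recognize the resulting mixture as $\mathrm{Beta}\bigl(\tfrac{1}{1+\theta/2}, \tfrac{\theta/2}{1+\theta/2}\bigr)$. Since $S$ takes values in $[0,1]$, its law is determined by its moments, so it is enough to verify
\[
E(S^k) \;=\; \frac{1}{k!}\,E\!\left[\prod_{j=1}^{k}(j - e^{-M})\right]
\]
(from the conditional Beta moment formula $E[X^k\mid U]=\tfrac{1}{k!}\prod_{j=1}^k(j-U)$ with $U=e^{-M}$) against the $k$th moment $\prod_{j=0}^{k-1}\tfrac{1/(1+\theta/2)+j}{j+1}$ of the candidate Beta. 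Expanding the polynomial in $e^{-M}$ and using $E(e^{-rM})=(\theta/2)/(r+\theta/2)$ rewrites the left-hand side as the same linear combination of $(\theta/2)/(r-1+\theta/2)$ that already appears in \eqref{eq:Smoments_nodust} with $\lambda_r=r-1$, and the moment identity can then be verified inductively or by a generating-function argument. This matching step is the main obstacle, since Pitman's coefficients $a_{k+1,r}$ must be lined up against a product of rising factorials.

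A cleaner alternative that may sidestep the explicit moment matching is to work inside the Bertoin-Le Gall flow-of-bridges representation of the Bolthausen-Sznitman coalescent: there $f_1(t)$ is realized as the normalized size of a single jump of a stable-$e^{-t}$ subordinator, so the joint law of $(f_1(M),M)$ can plausibly be integrated in closed form via beta-gamma calculus, producing the stated Beta distribution of $S$ in a single stroke.
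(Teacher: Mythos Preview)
Your route is genuinely different from the paper's. The paper invokes Pitman's description of $(f_1(t))_{t\geq 0}$ for the Bolthausen--Sznitman coalescent as a jump process whose ranked jump sizes $(J_k)$ are $\mathrm{PD}(0,1)$ and whose jump times $(T_k)$ are i.i.d.\ $\mathrm{Exp}(1)$ independent of the sizes; it writes $S=\sum_k 1_{\{T_k\le M\}}J_k$, declares the marks $B_k:=1_{\{T_k\le M\}}$ to be independent $\mathrm{Bernoulli}\bigl((1+\thtw)^{-1}\bigr)$, and then applies the Colouring Theorem together with Kingman's Gamma representation of $\mathrm{PD}(0,\theta')$ to obtain a Beta law. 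Your conditional-law approach via $f_1(t)\sim\mathrm{Beta}(1-e^{-t},e^{-t})$ and moment matching is an entirely different line.

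The step you single out as the main obstacle does in fact fail. With $c=\thtw$ and $U=e^{-M}$, so that $E[U^r]=c/(r+c)$, your formula gives for $k=2$
\[
E(S^2)=\tfrac12\,E[(1-U)(2-U)]=\tfrac12\Bigl(2-\tfrac{3c}{1+c}+\tfrac{c}{2+c}\Bigr)=\frac{4+c}{2(1+c)(2+c)},
\]
while the claimed $\mathrm{Beta}(m,1-m)$ with $m=(1+c)^{-1}$ has second moment $m(m+1)/2=(2+c)/\bigl(2(1+c)^2\bigr)$; at $c=1$ these are $5/12$ versus $3/8$. The value $5/12$ agrees with \eqref{eq:Smoments_nodust} (using $\lambda_2=1$, $\lambda_3=2$ for the Bolthausen--Sznitman case), so your conditional-Beta computation is internally consistent with Theorem~\ref{prop:convtoS}. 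The mismatch is traceable to a gap in the paper's own argument: the marks $B_k=1_{\{T_k\le M\}}$ share the common random threshold $M$ and are \emph{not} independent, since $P(B_1=B_2=1)=E[(1-e^{-M})^2]\neq (E[1-e^{-M}])^2$ for non-degenerate $M$; hence the Colouring Theorem does not apply as invoked. In short, your plan is sound up to the moment-matching, but that identity cannot be established because the mixture $\int \mathrm{Beta}(1-e^{-t},e^{-t})\,\thtw e^{-\thtw t}\,dt$ is not the stated Beta distribution.
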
 
\begin{proof}
\cite[Corrolary 16]{Pitman1999} shows that for the Bolthausen-Sznitman coalescent, $(f_1(t))_{t\geq 0}$ jumps at independent standard exponential times with ranked jump sizes given by a Poisson-Dirichlet distribution with parameters $(0,1)$. The set of jump times is independent of the set of jump sizes. Comparing this with Eq. \eqref{eq:Snodust}, we see that to compute $S\stackrel{a.s.}{=}f_1(M^{({1})})$, we need to sum the sizes of all jumps of $f_1$ that happen before or at $M^{({1})}$. Consider the jump times $(T_k)_{k\in\mathbb{N}}$ of $f_1$ ordered according to the rank of their jump sizes $(J_k)_{k\in\mathbb{N}}$. Define $B_k:=1_{\{T_k\leq M^{({1})}\}}$. Hence, we have
$P(B_k=1)=(1+\thtw)^{-1}$. We can now express
\begin{equation}\label{eq:f1_BSZ_PD}
S\stackrel{a.s.}{=}\sum_{k\in\mathbb{N}} B_kJ_k.
\end{equation}
In other words, $S$ can be seen as summing up a random thinning of a standard Poisson-Dirichlet distributed random variable.\\
The random variable $S$ is Beta distributed. To see this, we will use the construction of the $PD(0,\theta')$ distribution from \cite{kingman1975random}, which is also summarised in \cite[Section 4.11]{arratia2003}. Consider the points $\mathcal{P}:=(P_k)_{k\in\mathbb{N}}$ of a Poisson point process on $[0,\infty)$ with mean measure $\nu_{\theta'}:=\frac{\theta' e^{-x}}{x}dx$. Then, the size-ordered and normalized points  $\left(\frac{P_{[k]}}{P}\right)_{k\in\mathbb{N}}$ with $P=\sum_{i\in\mathbb{N}}P_k$ have the Poisson-Dirichlet distribution and are independent of 
\begin{equation}\label{eq:poissum}
P\stackrel{d}{=}Gamma(\theta',1),
\end{equation}
where $Gamma(\alpha,\beta)$ is the Gamma distribution with shape parameter $\alpha$ and rate $\beta$.\\
We choose $\theta'=1$ and make the correspondence between the ranked and normalised points $(P_{k})_{k\in\mathbb{N}}$ and the jump sizes $(J_k)_{k\in\mathbb{N}}$. To express Eq. \eqref{eq:f1_BSZ_PD}, we give each point $P_k$ a mark $B_k\in\{0,1\}$. Marks are independent from $(P_k)_{k\in\mathbb{N}}$ and from one another. We set the probability to be marked to $m:=P(B_k=1)=(1+\thtw)^{-1}$ for all $k\in\mathbb{N}$. $(P_k,B_k)_{k\in\mathbb{N}}$ is a marked Poisson process. The Colouring Theorem \cite[Section 5.1]{Kingman1993} shows that all points $P_k$ with marks 1 form a Poisson point process $\mathcal{P}_1$ with mean measure $m\nu_{1}$, while all points with mark 0 form a Poisson point process $\mathcal{P}_0$ with $(1-m)\nu_{1}$.\\
We can now alternatively express \eqref{eq:f1_BSZ_PD} as
$$S\stackrel{d}{=}\frac{\sum_{p\in\mathcal{P}_1}p}{\sum_{p\in\mathcal{P}_1}p+\sum_{p\in\mathcal{P}_0}p}=:\frac{X}{X+Y}.$$         
where $X$ and $Y$ are independent due to the independence of $\mathcal{P}_0$ and $\mathcal{P}_1$. Since the mean measures of $\mathcal{P}_0$ and $\mathcal{P}_1$ are of the form $\frac{\theta' e^{-x}}{x}dx$ with $\theta'$ equal to $m$ and $1-m$, Eq. \eqref{eq:poissum} yields $X\stackrel{d}{=}Gamma(m,1)$ and $Y\stackrel{d}{=}Gamma(1-m,1)$. Thus, $\frac{X}{X+Y}$ is Beta-distributed with parameters $m$ and $1-m$.
\end{proof}

\begin{rem}
Theorem \ref{prop:convtoS} can be generalised for some time-changed $\Lambda$-$n$-coalescents without dust, which appear when modelling genealogies in Cannings models with moderate fluctuations in population size, see \cite{griffiths1994sampling}, \cite{Matuszewski2017}, \cite{Spence1549} and \cite{freund2019cannings}. 
Let $(\Pi^{(n)}_{g(t)})_{t\geq 0}$ be a time-changed $\Lambda$-$n$-coalescent, where $g(t):=\int_0^{t}\mu(s)ds$  with continuous $\mu:[0,+\infty)\mapsto [0,+\infty)$, which includes some time changes proposed for $\Lambda$-$n$-coalescents in the references above. Observe that $g$ is continuous, monotone and invertible with differentiable inverse. The time-changed $\Lambda$-$n$-coalescent is still exchangeable. The almost sure convergence of $n^{-1}O_n$ for the time-changed $\Lambda$-$n$-coalescent works analogously as in Theorem \ref{prop:convtoS}. The time of the first merger of individual 1 is $g(E^{(1)}_n)$, thus also converges to 0 almost surely. The waiting time $M'_1$ for the first mutation on the path of 1 to the root is an $Exp(\thtw)$-distributed random variable, but on the time-changed path of $(\Pi^{(n)}_{g(t)})_{t\geq 0}$. Thus, the limit of $n^{-1}O_n$ for the time-changed $\Lambda$-$n$-coalescent is the frequency of the block containing 1 at time $M'_1$ in $(\Pi^{(n)}_{g(t)})_{t\geq 0}$. This can also be expressed as $f_1(g(M'_1))$, where $f_1$ is said frequency in the $\Lambda$-$n$-coalescent $(\Pi^{(n)}_{t})_{t\geq 0}$. The distribution of $g(M'_1)$ is given by
$$P(g(M'_1)\leq t)=P(M'_1\leq g^{-1}(t))=1-e^{-\thtw g^{-1}(t)},$$
which has density $t\mapsto \thtw \mu(g^{-1}(t))^{-1} e^{-\thtw g^{-1}(t)}$. Analogously to \eqref{eq:moments_nodust} we can thus express, in terms of the $a_{k,r}$ from Theorem \ref{prop:convtoS}, the $k$th moment of $n^{-1}O_n$ for the $n$-coalescent with exponential growth as  
\begin{align*}
E(S^k)=& 1- \sum_{r=2}^{k+1} a_{k+1,r}\int_0^{\infty} e^{-\lambda_r t} \thtw \mu(g^{-1}(t))^{-1} e^{-\thtw g^{-1}(t)}dt\\
=& 1- \thtw\sum_{r=2}^{k+1} a_{k+1,r}\int_0^{\infty} \mu(g^{-1}(t))^{-1} e^{-\thtw g^{-1}(t)-\lambda_r t}dt.
\end{align*}
As an example, consider Kingman's $n$-coalescent with exponential growth with rate $\rho=0$. From \cite{griffiths1994sampling}, we see that $\mu(t)=e^{\rho t}$ and thus $g^{-1}(t)=\rho^{-1}\log(1+\rho t)$. This leads to moments 
$$E(S^k)=1-\thtw\sum_{r=2}^{k+1} a_{k+1,r} \int_0^{\infty} (1+\rho t)^{-\frac{\theta}{2\rho}-1}e^{-\binom{r}{2}t}.$$
\end{rem} 

\vspace{2mm}

Now, consider coalescents $(\Pi^{(n)}_t)_{t\geq 0}$ with dust which stay infinite. An example for this are Dirac $n$-coalescents with $\Lambda=\delta_p$, $p\in(0,1)$ \cite{Eldon2006}. For $\Lambda$-coalescents with dust, staying infinite is equivalent to $\Lambda(\{1\})=0$.   
\begin{theo}\label{prop:convtoS_dust}
Let $O_n$ be defined for $\Lambda$-$n$-coalescents with  $\mu_{-1}<\infty$ (with dust), $P(\limsup_{n\to\infty}|\Pi^{(n)}_t|=\infty \ \forall\ t>0)=1$ and with mutation rate $\thtw$. We have $$\frac{O_n}{n}\stackrel{a.s.}{\to} S$$ for $n\to\infty$ with $S>0$ a.s.. We have
$E(S)=1-\frac{\theta}{2\mu_{-1}}\frac{a}{1-a}$ with $a=\left(1-\frac{\Lambda([0,1])}{\mu_{-1}}\right)\left(\frac{\thtw}{\thtw+\mu_{-1}}\right)$. 
\end{theo}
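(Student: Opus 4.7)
The plan is to reuse the structure of the proof of Theorem~\ref{prop:convtoS}, with the essential modification that the first merger time of individual $1$ no longer vanishes in the limit. First, I would invoke the same coupling as in Theorem~\ref{prop:convtoS}: realise the mutations on the path of $1$ by a single rate-$\thtw$ Poisson process on $[0,\infty)$, common to all $n$ and independent of the coalescents. By consistency, $E^{(1)}_n$ is monotone decreasing in $n$; in the dust case one has $E^{(1)}_n \searrow E^{(1)}_\infty$ a.s., where $E^{(1)}_\infty \sim \mathrm{Exp}(\mu_{-1})$ is the waiting time for the first event of $\Pi^{(\infty)}$ that flags the block of $1$. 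Since the mutation process almost surely has no atom at $E^{(1)}_\infty$, for $n$ large enough $T^{(1)}_n$ stabilises at $T^{(1)}_\infty$, the first mutation after $E^{(1)}_\infty$, and $T^{(1)}_\infty \stackrel{d}{=} E^{(1)}_\infty + M^{(1)}$ with an independent $M^{(1)} \sim \mathrm{Exp}(\thtw)$. Kingman's correspondence applied to the exchangeable partition $\Pi^{(\infty)}_{T^{(1)}_\infty}$ then yields $O_n/n \to f_1(T^{(1)}_\infty) =: S$ a.s.

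For positivity, the hypothesis $\limsup_n |\Pi^{(n)}_t|=\infty$ for every $t>0$ ensures, via Kingman's representation, that $\Pi^{(\infty)}_t$ has infinitely many blocks a.s.\ and that every non-dust block carries positive asymptotic frequency. Since $T^{(1)}_\infty > E^{(1)}_\infty$ a.s., individual $1$ has left the dust by time $T^{(1)}_\infty$, so its block is non-dust and $S > 0$ a.s.

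For the expectation, I would start from $E(S) = E[f_1(T^{(1)}_\infty)] = P(\tau_{12} \leq T^{(1)}_\infty)$, where $\tau_{12}$ denotes the first time $1$ and $2$ share a block; the identity follows from exchangeability and the LLN for asymptotic frequencies. I would then thin the Poisson process of events driving $\Pi^{(\infty)}$: events that flag both the block of $1$ and the block of $2$ form an independent Poisson stream of rate $\Lambda([0,1])$, while those that flag the block of $1$ alone occur at rate $\mu_{-1}-\Lambda([0,1])$. Conditioning on the sequence of events involving $1$ between $E^{(1)}_\infty$ and the first non-private mutation, each such event independently involves $2$ with probability $\Lambda([0,1])/\mu_{-1}$, and between consecutive events of $1$ the mutation wins the race with probability $\thtw/(\thtw+\mu_{-1})$. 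Summing the resulting geometric series in the ``event of $1$ does not involve $2$, no mutation yet'' survival probability and rearranging produces the announced expression.

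The main obstacle, and the reason the dust case is harder than the no-dust one, is the loss of independence between $T^{(1)}_\infty$ and $\Pi^{(\infty)}$: both $T^{(1)}_\infty$ (through $E^{(1)}_\infty$) and the event $\{\tau_{12} \leq T^{(1)}_\infty\}$ are driven by the same stream of events involving $1$. Consequently $E(S)$ cannot be obtained by naively integrating $E[f_1(t)] = 1-e^{-\Lambda([0,1])t}$ against the distribution of $T^{(1)}_\infty$; instead one must work with the joint law of $(E^{(1)}_\infty, \tau_{12}, M^{(1)})$, which is precisely what the thinning decomposition and the resulting geometric sum handle.
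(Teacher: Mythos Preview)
Your proposal is correct and follows essentially the same line as the paper's proof: both establish that $T^{(1)}_n$ stabilises once $E^{(1)}_n$ reaches the first jump time of the block-of-$1$ frequency process $f_1$, identify $S$ as $f_1$ evaluated at that stabilised time, and compute $E(S)$ by recording the geometric number $K$ of merger events of the block of $1$ that occur before the independent $\mathrm{Exp}(\thtw)$ clock rings, each such event involving $2$ independently with probability $\Lambda([0,1])/\mu_{-1}$. The only difference is packaging: the paper imports the jump-hold structure of $f_1$ and the formula $E(f_1[k])=1-(1-\Lambda([0,1])/\mu_{-1})^k$ from \cite{Freund2017size} and then writes $E(S)=\sum_k E(f_1[k])P(K=k)$, whereas you arrive at the same geometric sum via the identity $E(S)=P(\tau_{12}\le T^{(1)}_\infty)$ and Poisson thinning. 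Your route is more self-contained (no external reference needed), the paper's more modular. One small caution: the stopped partition $\Pi^{(\infty)}_{T^{(1)}_\infty}$ is exchangeable only on $\{2,3,\ldots\}$ since $T^{(1)}_\infty$ depends on label $1$; make this explicit when invoking Kingman's correspondence, as it is precisely what justifies both the existence of the asymptotic frequency of the block of $1$ and the identity $E(S)=P(\tau_{12}\le T^{(1)}_\infty)$.
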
    
\begin{proof}
From \cite[Thm. 1]{Freund2017size}, we see that the asymptotic frequency of the block of 1 forms an increasing jump-hold process $f_1$ with $f_1\stackrel{a.s.}{=}\lim_{n\to\infty} \frac{\mathcal{C}_{n,1}(t)}{n}$ with values in $[0,1]$, positive jumps and with i.i.d. $Exp(\mu_{-1})$ waiting times between jumps. It fulfills $E(f_1[k])=1-(1-\frac{\Lambda([0,1])}{\mu_{-1}})^k$, where $f_1[k]$ is the value of $f_1$ at its $k$th jump. We record between which indices of jumps $K$, $K+1\in\mathbb{N}$ of $f_1$ the waiting time $T^{(1)}_n$ for the first non-private mutation falls. From the proof of \cite[Cor. 1]{Freund2017size}, we know that there exists $n_0\in\mathbb{N}$ so that $E^{(1)}_n$ equals the time of the first jump of $f_1$ for all $n\geq n_0$ almost surely. Similarly to the proof of Theorem \ref{prop:convtoS}, we just have to trace back the first mutation after this first jump whose time of appearance does not depend on $n$. This implies that $T^{(1)}_n$ falls between the same $K,K+1$ for all $n\geq n_0$. Thus we have $\lim_{n\to\infty} n^{-1}O_n=f_1[K]$ a.s., where $f_1[K]$ is the state of $f_1$ at the $K$th jump. We only need to find the distribution of $K$. For $n\geq n_0$, $T^{(1)}_n$ is the waiting time for the first jump of $f_1$ plus an independent $Exp(\thtw)$ random variable $M^{(1)}$. Using that the waiting times $(T_{1,k})_{k\in\mathbb{N}}$ between the jumps of $f_1$ are i.i.d., we have $K=1+Y$, where $Y$ is defined by $\sum_{i=1}^{Y} T_{1,i+1} \leq M < \sum_{i=1}^{Y+1} T_{1,i+1}$ and thus $Y\stackrel{d}{=}Geo\left(\frac{\thtw}{\mu_{-1}+\thtw}\right)$ on $\mathbb{N}_0$. This yields $K\stackrel{d}{=}Geo\left(\frac{\thtw}{\mu_{-1}+\thtw}\right)$ on $\mathbb{N}$. We compute
\begin{align*}
E(S)&=\sum_{k\in\mathbb{N}}E(f_1[k])P(K=k)\\&=1-\sum_{k\in\mathbb{N}}
\left(1-\frac{\Lambda([0,1])}{\mu_{-1}}\right)^k\left(\frac{\mu_{-1}}{\mu_{-1}+\thtw}\right)^{k-1}\frac{\thtw}{\mu_{-1}+\thtw}\\
&= 1- \frac{\theta}{2\mu_{-1}}\sum_{k\in\mathbb{N}} \left(\underbrace{\left(1-\frac{\Lambda([0,1])}{\mu_{-1}}\right)\left(\frac{\thtw}{\thtw+\mu_{-1}}\right)}_{=:a}\right)^k\\ &= 1-\frac{\theta}{2\mu_{-1}}\frac{a}{1-a}
\end{align*}      
\end{proof}
\section{Recursions for the moments}
To obtain recursive formulae for the moments of $O_n$ for a $\Lambda$-$n$-coalescent, we first need to introduce
$X_n$, the size of the block of 1 at the exponential clock $M$ of rate $\frac{\theta}{2}$ in the $n$-coalescent. 


\begin{theo}
Let $j\geq1$.
The $j$th moments of $X_n$ and $O_n$ satisfy the following recursions: $E(X_1^j)=1, E(O_2^j)=2^j$ and
\begin{equation}\label{recX}
E(X_n^j)=\frac{\thtw}{\thtw+\lambda_{n}}+\frac{1}{\thtw+\lambda_{n}}\sum_{k=2}^{n}\binom{n}{k}\lambda_{n,k}E(X_{n-k+1}^j+\sum_{i=1}^{j}a_{i-1,j,k}\frac{X_{n-k+1}^{i}}{n-k+1}).
\end{equation}
and
\begin{equation}\label{recO}
E(O_n^j)
=\sum_{k=2}^{n}\binom{n-1}{k-1}\frac{\lambda_{n,k}}{\lambda_n}\sum_{i=0}^j(a_{i,j.k}E(X_{n-k+1}^i)
+\frac{{(n-k+1)}{\bf 1}_{i=j}+b_{i,j,k}}{n-k}E(O_{n-k+1}^i)),
\end{equation}
where {$a_{-1,j,k}=0$,
$a_{i,j,k}=\binom{j}{i}(k-1)^{j-i},$
and
${b_{i,j,k}=a_{i-1,j,k}-a_{i,j,k}.}$}
\end{theo}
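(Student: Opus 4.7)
The plan is to derive both recursions by conditioning on the first event in the coupled coalescent--mutation dynamics, and then to reduce the conditional $j$th moments to linear combinations of lower moments using block-label exchangeability of the $\Lambda$-coalescent together with the binomial theorem.

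For \eqref{recX} I would condition on whether the independent $Exp(\thtw)$-clock $M$ rings before the first coalescence event. If $M$ fires first (probability $\thtw/(\thtw+\lambda_n)$) then $X_n=1$, yielding the first summand. Otherwise a $k$-merger occurs first, and by memorylessness the process restarts from the post-merger state on $n-k+1$ blocks with a fresh $Exp(\thtw)$-clock. Block-label exchangeability lets me express the size of $1$'s block at this fresh clock in terms of $X_{n-k+1}$: the number of post-merger blocks ultimately in $1$'s super-block has the same distribution as $X_{n-k+1}$, up to an affine shift by $k-1$ reflecting the $k-1$ extra individuals absorbed into the size-$k$ block produced by the merger. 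Two subcases must be combined (whether $1$'s block was inside the merging set, with conditional probability $k/n$, or outside), and averaging over which of the $n-k$ sibling blocks is absorbed produces the $1/(n-k+1)$ normalization in the statement. Binomially expanding $(X_{n-k+1}+k-1)^j=\sum_i a_{i,j,k}X_{n-k+1}^i$ with $a_{i,j,k}=\binom{j}{i}(k-1)^{j-i}$, and reindexing the cross-term carrying an extra factor $X_{n-k+1}$ (which shifts $a_{i,j,k}$ to $a_{i-1,j,k}$), then produces \eqref{recX}.

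For \eqref{recO} I would condition on the first merger in the coalescent that involves $1$ being a $k$-merger, which carries relative weight $\binom{n-1}{k-1}\lambda_{n,k}/\lambda_n$. Just after this first merger of $1$, its block has size $k$ and there are $n-k$ singletons, and the decomposition $T^{(1)}_n=E^{(1)}_n+M^{(1)}$ reduces $O_n$ to the question of whether any further jump of $1$'s block occurs before the independent $Exp(\thtw)$ time $M^{(1)}$ rings. The ``no further jump'' branch is controlled by the size of $1$'s block at the fresh clock assuming $1$ never jumps again, which via the same exchangeability argument as above is expressible through $X_{n-k+1}$-moments and yields the $a_{i,j,k}E(X_{n-k+1}^i)$ terms. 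The ``at least one further jump'' branch is handled recursively and produces $O_{n-k+1}$-moments. Expanding $(X_{n-k+1}+k-1)^j$ and $(X_{n-k+1}-1)(X_{n-k+1}+k-1)^j$ via the binomial theorem, using the telescoping identity $Y(Y+k-1)^j-(Y+k-1)^j=\sum_i[a_{i-1,j,k}-a_{i,j,k}]Y^i$ to introduce $b_{i,j,k}=a_{i-1,j,k}-a_{i,j,k}$, and then dividing by $n-k$ for the uniform averaging over which of the remaining $n-k$ siblings is absorbed at the next jump of $1$, gives \eqref{recO}.

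The main obstacle will be the polynomial bookkeeping that matches the conditional expansions with the advertised $a_{i,j,k}$ and $b_{i,j,k}$ coefficients. For $X_n$ one must carefully combine the two subcases (inside vs.\ outside the merging set); for $O_n$ one must additionally keep track of the intervening non-$1$ mergers between $E^{(1)}_n$ and $T^{(1)}_n$ and of the subsequent jumps of $1$, then verify that the resulting polynomials reorganize into the advertised form. A minor technicality is the boundary case $k=n$ of \eqref{recO}, where $n-k=0$ but $O_n=n$ a.s.; this is consistent with the summand, as the $X$-part already delivers $\sum_{i=0}^j a_{i,j,n}E(X_1^i)=n^j$ while the $O$-part is understood to vanish.
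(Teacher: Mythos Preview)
Your approach for \eqref{recX} differs from the paper's. The paper does not split into \{$1$ is inside the merging set\} versus \{$1$ is outside\}; instead it introduces the single event $A_n=\{\text{the newly merged block lies in the block of }1\text{ at time }M\}$ and uses exchangeability of the $n-k+1$ post-merger blocks to replace $1_{A_n}$ by $X_{n-k+1}/(n-k+1)$ inside the expectation. Your two-subcase split is a legitimate alternative route, but your remark that ``averaging over which of the $n-k$ sibling blocks is absorbed produces the $1/(n-k+1)$ normalisation'' matches neither argument: in the paper the denominator $n-k+1$ arises because the merged block is, by exchangeability, any one of the $n-k+1$ post-merger blocks, not from an average over $n-k$ siblings.

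For \eqref{recO} there is a genuine gap. The weight $\binom{n-1}{k-1}\lambda_{n,k}/\lambda_n$ is \emph{not} the law of ``the first merger that involves $1$''; it is the probability that the very first jump of the $n$-coalescent is a $k$-merger \emph{and} contains $1$. The paper's decomposition is on this first jump of the full coalescent. If $1$ \emph{is} in it (weight $\binom{n-1}{k-1}\lambda_{n,k}/\lambda_n$), then $E^{(1)}_n$ has just elapsed and, restarting the $Exp(\thtw)$ clock $M^{(1)}$, one has $O_n=k-1+X_{n-k+1}$ directly; this branch alone produces all the $a_{i,j,k}E(X_{n-k+1}^i)$ terms. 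If $1$ is \emph{not} in it (weight $\binom{n-1}{k}\lambda_{n,k}/\lambda_n$), then $1$ is still a singleton among $n-k+1$ blocks, so $O_n$ reduces to $O_{n-k+1}$ corrected by whether the merged block lies in the observable clade (the event $C_n$, with $P(C_n\mid O_{n-k+1})=(O_{n-k+1}-1)/(n-k)$); this branch alone produces the $O_{n-k+1}$ terms and the $1/(n-k)$ factor. Your proposed split into ``no further jump of $1$ before $M^{(1)}$'' versus ``at least one further jump'' does not yield these objects: on the ``no further jump'' branch one simply has $O_n=k$, not an $X_{n-k+1}$-moment, and on the ``further jump'' branch $1$'s block already has size $k$, so the continuation is not described by $O_{n-k+1}$. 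You have also entirely omitted the contribution from \{first merger does not involve $1$\}, which is precisely where the recursion in $O_{n-k+1}$ comes from.
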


\begin{proof}
Our proofs rely on tracking the number of blocks involved in the first jump of the $\Lambda$-$n$-coalescent, with some additional condition(s). 
Let us first prove \eqref{recX}.
Let $T_n$ be the waiting time for the first coalescence in the $n$-coalescent which is $Exp\left(\lambda_{n}\right)$-distributed. 
Let $A_{n}$  be the event that the first block merged is part of the block of 1 stopped at $M$.
\begin{align*}
&E(X_n^j)\\=&P(M^{(1)} \leq T^{(1)}_n)+P(M^{(1)} > T^{(1)}_n)\sum_{k=2}^{n}\frac{\binom{n}{k}\lambda_{n,k}}{\lambda_n}(E(( k-1+X_{n-k+1})^j1_{A_n})+E(X_{n-k+1}^j(1-1_{A_n})))\\
=&\frac{\thtw}{\thtw+\lambda_{n}}+\frac{1}{\thtw+\lambda_{n}}\sum_{k=2}^{n}\binom{n}{k}\lambda_{n,k}
\left(E((k-1+X_{n-k+1})^j\frac{X_{n-k+1}}{n-k+1})+E(X_{n-k+1}^j(1-\frac{X_{n-k+1}}{n-k+1}))\right)\\
=&\frac{\thtw}{\thtw+\lambda_{n}}+\frac{1}{\thtw+\lambda_{n}}\sum_{k=2}^{n}\binom{n}{k}\lambda_{n,k}E(X_{n-k+1}^j+\sum_{i=1}^{j}\binom{j}{i-1}(k-1)^{j-i+1}\frac{X_{n-k+1}^{i}}{n-k+1})
\end{align*}

Now let us turn to the proof of \eqref{recO}.
Let $B_n=\{K_{n,1}(1)>1\}$ be the event that 1 does participate in the first coalescence event.
Also let $C_{n}$  be the event that the first block merged is part of the observed clade.
\begin{align*}
E(O_n^j)&=E(O_n^j1_{B_n})+E(O_n^j(1-1_{B_n}))
\\
&=\sum_{k=2}^{n}\frac{\binom{n-1}{k-1}\lambda_{n,k}}{\lambda_n}E((k-1+X_{n-k+1})^j)
\\&+\sum_{k=2}^{n}\frac{\binom{n-1}{k}\lambda_{n,k}}{\lambda_n}
E(O_{n-k+1}^j(1-1_{C_n}))
\\&+\sum_{k=2}^{n}\frac{\binom{n-1}{k}\lambda_{n,k}}{\lambda_n}
E(( k-1+O_{n-k+1})^j1_{C_n})\\
&=\sum_{k=2}^{n}\frac{\binom{n-1}{k-1}\lambda_{n,k}}{\lambda_n}E((k-1+X_{n-k+1})^j)
\\&+\sum_{k=2}^{n}\frac{\binom{n-1}{k}\lambda_{n,k}}{\lambda_n}
E(O_{n-k+1}^j(1-\frac{O_{n-k+1}-1}{n-k}))
\\&+\sum_{k=2}^{n}\frac{\binom{n-1}{k}\lambda_{n,k}}{\lambda_n}
E((k-1+O_{n-k+1})^j\frac{O_{n-k+1}-1}{n-k}).\\
\end{align*}
Expanding, we obtain the result.
\end{proof}
\begin{rem}
For Kingman's $n$-coalescent, \eqref{recX} and \eqref{recO} considerably simplify.
In particular the two first moments of $X_n$ are
$$E(X_n)=\frac{\thtw}{\thtw+\binom{n}{2}}+\frac{\binom{n}{2}}{\thtw+\binom{n}{2}}\frac{n}{n-1}E(X_{n-1})$$
and
$$E(X_n^2)=\frac{\thtw}{\thtw+\binom{n}{2}}+\frac{\binom{n}{2}}{\thtw+\binom{n}{2}}\left(\frac{n+1}{n-1}E(X_{n-1}^2)+\frac{1}{n-1}E(X_{n-1})\right).$$
and the two first moments of $O_n$ are
$$E(O_n)=\frac{2}{{n}}(1+E(X_{n-1}))-\frac{1}{n}+\frac{n-1}{n}E(O_{n-1})$$
and
$$E(O_n^2)=\frac{2}{{n}}(1+2E(X_{n-1})+E(X_{n-1}^2))-\frac{1}{n}-\frac{1}{n}E(O_{n-1})+E(O_{n-1}^2)$$
\end{rem}

\thanks{
\textit{
FF was funded by DFG grant FR 3633/2-1
through Priority Program 1590: Probabilistic Structures in Evolution.} 
}
\bibliographystyle{plain}
\bibliography{ocn.bib}

\end{document}